\documentclass{amsart}
\usepackage{macros}
\standardsettings

\newcommand{\NN}{\mathbb{N}}

\begin{document}
\title{Algorithm to detect periodicity by interleaving sequences}
\author{G. Tony Jacobs}

\begin{Abstract}

We define an algorithm which begins with an sequence of sequences, and produces a single sequence, with following property: If at least one of the original sequences has a tail that is periodic, then the output sequence has a periodic tail, and conversely. Our purpose is to supplement a result by Dasaratha et al., in which a real number input results in a countable family of sequences, with the property that at least one is eventually periodic if and only if the input is a cubic irrational number. This result, in the context of that one, attempts to address Hermite's problem, which asks for some generalization of continued fractions that can detect cubic numbers via periodicity. We note in our ending remarks how this particular trick fails to give a satisfying resolution to Hermite's problem, but we present it anyway, for anyone whom it may interest.

\end{Abstract}

\maketitle

\section{Main Result}

Let $X$ be some set. We wish to define a map from $(X^\NN)^\NN$ to $X^\NN$ with the following property. Given an input which is a sequence of sequences, we want our output to be a single sequence that is eventually periodic if and only if at least one of the sequences in the input is eventually periodic. Let $a_{i,j}$ denote the $j$-th term in the $i$-th input sequence: $((a_{i,j})_{j\geq 1})_{i\geq 1}\in(X^\NN)^\NN$. We proceed to describe an algorithm for generating an output sequence $(b_n)_{n\geq 1} \in X^\NN$.

First, we define some terms.

\begin{definition}
Let $A=(a_1,\ldots,a_n)$ be an $n$-tuple. Suppose there is some $k<n$ such that the relation $a_{i+k}=a_i$ for $i=1,\ldots,n-k$, is satisfied. Suppose in addition that this $k$ is minimal, i.e., that $h<k$ implies that $a_i\neq a_{i+h}$ for some $i=1,\ldots,n-h$. Then we say that $A$ is \emph{finite-periodic with period $k$}, or simply \emph{finite-periodic}. In this case, we call $(a_1,\ldots,a_k)$ the \emph{finite-fundamental string} of $A$.
\end{definition}

We construct our output sequence by concatenating tuples, or blocks of terms, taken from the input sequences. We denote the blocks $B_l$ for $l\geq 1$, and the length of block $B_l$ is $2^l$. Blocks are taken from input sequences, in a diagonal fashion described below, checking for periodicity at each step.

Define the sequence $(i_k)_{k\geq 1}=(1,2,1,2,3,1,2,3,4,1,2,3,4,5,\ldots)$ as our sequence of indices for the purpose of diagonalization. (Note, we can give a formula for $i_k$ in terms of triangular numbers: Let $T_n=\frac{n(n+1)}{2}$ be the $n$-th triangular number. Then, for $T_n\leq k <T_{n+1}$, we have $i_k=k-T_n+1$.)

\begin{enumerate}
	\item Begin with $k=l=1$; the variable $k$ tracks our place in the indexing sequence $(i_k)$, while $l$ is the numbering for blocks. Define a Boolean variable $P$ to indicate whether we have detected finite-periodicity that may persist, and begin with $P=\textsc{false}$.
	
	\item Create block $B_l$ by removing $2^l$ terms from the beginning of the sequence $(a_{i_k,j})_j$. We say ``removing'' to emphasize that any terms used as part of block $B_l$ will not be available for any subsequent block $B_{l+h}$. Each block taken from a sequence begins with the first unused term.

	\item Check the value of $P$:
	\begin{enumerate}
		\item If $P=\textsc{false}$, check whether the block $B_l$ is finite-periodic. If so, then set $P=\textsc{true}$, let $S$ be the finite-fundamental string of $B_l$, and proceed to step $4$. If $B_l$ is not finite-periodic, then increment $k$ by $1$ and proceed to step $4$.
		\item If $P=\textsc{true}$, then we need to check whether some previously detected finite-periodicity has persisted. In this case, we have a string $S$, which is the finite-fundamental string of some concatenation of blocks $B_h + \cdots + B_{l-1}$, with $h<l$. We now check whether the concatenation $B_h + \cdots + B_l$ is finite-periodic with finite-fundamental string $S$. If it is, proceed to step $4$. If it is not, set $P=\textsc{false}$, increment $k$ by $1$, and proceed to step $4$.
	\end{enumerate}
	
	\item Increment $l$ by $1$ and return to step $2$.
\end{enumerate}

Concatenating all of the blocks $B_1 + \cdots$, we obtain our output sequence $(b_n)$.

\begin{definition}
The process described above, viewed as a map $T:(X^{\NN})^{\NN} \to X^{\NN}$ is called the \emph{sequence interleaving algorithm}.
\end{definition}

\begin{theorem}
\label{hermite}
Given a countable family of sequences as input, the output of the sequence interleaving algorithm is an eventually periodic sequence if and only if some sequence in the input family is eventually periodic.
\end{theorem}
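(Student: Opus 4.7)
The plan is to isolate a single technical fact---call it the \emph{stability lemma}---and then deduce both directions of the biconditional from it by state-tracking. The lemma states: if $B$ is a window of length at least $2p$ inside an eventually periodic sequence whose tail has minimal period $p$, and $q$ is the minimal finite-period of $B$, then $q\mid p$, the tail actually has period $q$, and any longer window in the same tail that has $B$ as a prefix is again finite-periodic with the same finite-fundamental string as $B$. The proof is a Fine--Wilf argument: $B$ has both period $p$ and period $q$ with $|B|\geq p+q$, so Fine--Wilf forces $B$ to have period $\gcd(p,q)$, and minimality of $q$ gives $q\mid p$; then $|B|\geq p$ makes $B$ cover a full fundamental cycle of the tail, so the cycle and hence the whole tail has period $q$. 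Stability of the fundamental string across extensions is automatic, since any period of an extended string is also a period of its prefix $B$, so the extension's minimal period can be neither smaller nor larger than $q$.

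For the forward direction, assume the output is eventually periodic with period $p$ past some position $N$, and choose $l_0$ large enough that $B_{l_0}$ begins past $N$ in the output and $2^{l_0}\geq 2p$. If $P=\textsc{false}$ on entry to $B_{l_0}$, the stability lemma guarantees that $B_{l_0}$ is finite-periodic, that $P$ flips to $\textsc{true}$, and---by induction on $l$, applying the lemma to each successive concatenation, which is still a window of the output's periodic tail---the extension test in step 3(b) succeeds at every subsequent block, so $k$ is never advanced again. If instead $P=\textsc{true}$ on entry to $B_{l_0}$, then either the extension succeeds and the same induction runs, or $P$ resets to $\textsc{false}$, $k$ advances, and we rerun the argument at $B_{l_0+1}$, whose length is still at least $2p$. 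In every case the algorithm is eventually locked onto a single input $i^*$ with $P=\textsc{true}$, so the tail of input $i^*$ coincides with a periodic tail of the output, and input $i^*$ is eventually periodic.

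For the backward direction, suppose input $m$ is eventually periodic with period $p$ starting at term $J$, and argue by contrapositive: assume the algorithm switches inputs infinitely often, so $k\to\infty$. Since $m$ appears infinitely often in the diagonalization sequence $(i_k)$, the algorithm visits input $m$ infinitely often; and because both the block index $l_r$ and the visit-starting position $j_r$ inside input $m$ tend to infinity with the visit number $r$, we can pick $r$ large enough that $j_r\geq J$ and $2^{l_r}\geq 2p$. Then $B_{l_r}$ is a window of length at least $2p$ inside input $m$'s periodic tail, and the stability lemma forces the algorithm to detect finite-periodicity at $B_{l_r}$ and never leave input $m$ again---contradicting infinite switching. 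Hence the algorithm eventually locks onto a single input with $P$ true, and the output, equal to a tail of that input's periodic tail, is eventually periodic. The main obstacle is the stability lemma itself, specifically the conjoined claims that the minimal finite-period $q$ of $B$ divides the true tail period $p$ and that successive extensions within the same tail do not alter the finite-fundamental string; once that lemma is in hand, both halves of the theorem reduce to the bookkeeping described above.
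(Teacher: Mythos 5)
Your argument is correct, and its skeleton is the same as the paper's: for the forward direction, a case split on the state of $P$ at a late enough block, concluding that the algorithm locks onto one input whose tail then equals a periodic tail of the output; for the backward direction, the observation that if $k$ advanced forever the algorithm would revisit the periodic input with ever longer blocks at ever later starting positions, after which periodicity would be detected and never lost. What you do differently, and what makes your write-up genuinely stronger, is isolating the Fine--Wilf stability lemma and insisting on a window length of at least $2p$. The paper only demands $2^{l_0}>m$ and asserts that the string detected in $B_{l_0}$ is $S$ or a cyclic permutation of it; for windows of length strictly between $m$ and $2m$ this can fail. Concretely, if the output tail is $(aaabaa)^{\infty}$, with minimal period $6$, the length-$8$ window $aaabaaaa$ has minimal finite-period $5$ with finite-fundamental string $aaaba$, which is not a cyclic shift of $aaabaa$ and does not survive the next extension test. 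Your $2p$ threshold plus Fine--Wilf rules this out and supplies exactly the persistence claim (``the periodicity is not broken in subsequent blocks'') that the paper takes for granted. Two small things to tighten: in the forward direction, the extension test entered with $P=\textsc{true}$ may succeed at $B_{l_0}$ and only fail at some later $B_{l_1}$, so the restart should happen at $B_{l_1+1}$ (either the test succeeds forever, which is lock-on, or it fails once and your first case applies from the next block on); and in the backward direction you should say explicitly that the first block of each fresh visit to input $m$ is entered with $P=\textsc{false}$ --- which holds because $k$ advances only in steps that set or leave $P$ false --- since that is what licenses invoking step 3(a) at $B_{l_r}$.
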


\begin{proof}
Indeed, suppose that $(b_n)=T\left(((a_{i,j})_j)_i\right)$ is eventually periodic, with period $m$, fundamental string $S$, and a pre-period of length $r$. Choose $l_0$ minimal so that $\sum_{d=1}^{l_0-1}2^d>r$ and $2^{l_0}>m$. Now, in considering what happens when we reach block $B_{l_0}$, we must consider two cases:

Suppose we have, after finishing with block $B_{l_0-1}$, either $P=\textsc{false}$ or $P=\textsc{true}$ and finite-fundamental string $S$ (or some cyclic permutation of $S$). In either instance, after examining block $B_{l_0}$ in step $3$ of the algorithm, we will have $P=\textsc{true}$, with finite-fundamental string $S$ (or the same cyclic permutation), and $P$ will remain in this state because the periodicity is not broken in subsequent blocks. Therefore, $k$ will not increment again, and the rest of $(b_n)$ will be a tail of whichever input sequence was the source of block $B_l$. Thus, that input sequence is eventually periodic.

As a second case, suppose that the algorithm finishes with block $B_{l_0-1}$ with $P=\textsc{true}$ and finite-fundamental string $S'$ that is different from any cyclic permutation of $S$. In this case, we will finish block $B_l$ with $P=\textsc{false}$, and it will be in block $B_{l_0+1}$ that we finally get $P=\textsc{true}$ with finite-fundamental string $S$, just as in the above case. In that case, whatever input sequence is the source of terms for block $B_{l_0+1}$ is eventually periodic.

Conversely, suppose $(b_n)$ is not eventually periodic, and suppose by way of contradiction that some input sequence $(a_{i_0,j})_j$ is eventually periodic. Since the indicator $P$ can only remain in a true state while the same string continues to repeat, the lack of eventual periodicity in $(b_n)$ implies that $P$ is in a false state infinitely often. Therefore, $k$ is incremented infinitely often, and $i_k=i_0$ infinitely often, paired each time with values of $l$ that increase without bound.

Now, if $(a_{i_0,j})_j$ is eventually periodic, it has some pre-period length and some period. Eventually, $i_k$ will equal $i_0$ often enough that the corresponding blocks use up the pre-period. Eventually after that, $i_k$ will return to $i_0$ enough times that $l$ will grow until $2^l$ is greater than the period of $(a_{i_0,j})_j$. When these conditions are met, there is no way for the algorithm to leave the sequence $(a_{i_0,j})_j$, and $P$ will remain true, a contradiction.
\end{proof}

In the second part of the proof, a proof by contrapositive might seem more direct, but such an approach is complicated by the fact that more than one input sequence may be periodic, so we need the assumption that $(b_n)$ is not periodic to ensure sufficient returns to the putative periodic input.

We note that, in the case where some sequence in the input family is periodic, exactly one periodic input sequence appears, in its entirety, in the output sequence, and can be recovered by analysis of the output sequence. In the case where no input sequence is periodic, all of the input sequences appear in their entireties in the output sequence, and can be reconstructed from it.

\section{Remarks}

Hermite noted that the continued fraction algorithm has the property that the sequence of natural numbers it produces is eventually periodic if and only if the starting number is a quadratic irrational. He famously asked whether there might be some algorithm that produces a corresponding sort of sequence that displays periodicity for cubic irrational numbers.\cite{hermite} Many such ``generalized continued fractions'' or ``multi-dimensional continued fractions'' have been designed in attempts to do this. Dasaratha, et al., produce a countable family of sequences, generated from a real input by a countable family of algorithms, with the property some sequence in the family is eventually periodic if and only if the input is a cubic irrational.\cite{dasarathaetal}

Such a family of algorithms, composed with the sequence interleaving algorithm, produces one sequence from a real input, and this sequence will be eventually periodic if and only if the input is a cubic irrational. Thus, we can answer Hermite's question in the affirmative.

This answer, however, seems profoundly unsatisfying. The sequence that it offers is not a ``description'' of the input number in the way that a continued fraction is. It is not an arithmetic algorithm, but a higher-order combination of arithmetic algorithms. Indeed, some initial segment of the sequence comes from examining the ``wrong'' algorithms, which had been tuned to detect different cubic numbers, coming from different cubic fields. One would prefer to offer Hermite something more analogous to the continued fraction algorithm.

In what sense do continued fractions describe a number? The obvious answer is that they describe it in terms of Diophantine approximations, but in the case of quadratic irrationals, there is more to be said. The terms in the continued fraction expansion of such a number give us information about the structure of the ring of integers in the field generated by our input. In particular a number corresponds to some ideal in some subring of that field's integers. The pre-period inicates the steps of a ``reduction'' process, passing through a sequence of ideals, and the period itself corresponds to a set of finitely many ``reduced ideals''. The theory describing this is part of the so-called ``infrastructure of quadratic fields'', the development of which can be found in Mollin's book, \textit{Quadratics}.\cite{mollin}

To really emulate the relationship between the continued fraction algorithm and quadratic numbers, it would seem desirable to explore an infrastructure of cubic fields. One initial step in that direction is attempted in ``Periodic Sequences in Pure Cubic Fields''\cite{jacobs}.

\bibliographystyle{amsplain}

\bibliography{bibliography}

\end{document}